\newtheorem*{thm}{Theorem}
\newtheorem*{lem}{Lemma}
\DeclareSymbolFont{script}{U}{eus}{m}{n}
\DeclareMathSymbol{\Wedge}{0}{script}{"5E}
\title[Beltrami via parabolic]
{Beltrami's theorem via parabolic geometry}
\author[Michael Eastwood]{Michael Eastwood}
\address{\hskip-\parindent
School of Mathematical Sciences\\
University of Adelaide\\ 
SA 5005\\ 
Australia}
\email{meastwoo@member.ams.org}
\subjclass{53A20}
\thanks{This work was also supported by the Simons Foundation grant 346300 and
the Polish Government MNiSW 2015--2019 matching fund. It was written whilst the
author was at the Banach Centre at IMPAN in Warsaw for the Simons Semester
`Symmetry and Geometric Structures.'}
\begin{document}
\begin{abstract} 
We use Beltrami's theorem as an excuse to present some arguments from parabolic
differential geometry without any of the parabolic machinery.
\end{abstract}    
\maketitle 

\setcounter{section}{-1}
\section{Introduction}
One version of Beltrami's theorem~\cite{B} is as follows.
\begin{thm} Suppose $M$ is a smooth two-dimensional manifold and $g_{ab}$ is a
Riemannian metric on~$M$. Then $g_{ab}$ is constant curvature if and only if
there are local co\"ordinates near any point in which the geodesics of $g_{ab}$
become straight lines.
\end{thm}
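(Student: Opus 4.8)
The plan is to recognise the condition ``geodesics become straight lines'' as the statement that the Levi-Civita connection $\nabla$ of $g_{ab}$ is \emph{projectively flat}, and then to detect projective flatness by a single tensorial obstruction which, for a metric connection in two dimensions, turns out to be exactly the derivative of the Gaussian curvature. Both implications of the theorem then fall out of the same computation.

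First I would recall the classical description of when two torsion-free connections share the same unparametrised geodesics: $\widehat\nabla$ and $\nabla$ are \emph{projectively equivalent} precisely when
\[
\widehat\nabla_a\omega_b=\nabla_a\omega_b-\Upsilon_a\omega_b-\Upsilon_b\omega_a
\]
for some $1$-form $\Upsilon_a$. Straightening the geodesics in local co\"ordinates means exactly that $\nabla$ is projectively equivalent to the flat co\"ordinate connection, so each direction of the theorem reduces to deciding intrinsically whether $\nabla$ is projectively flat. To this end I would introduce the projective Rho tensor $\Rho_{ab}$ (a normalised piece of the Ricci curvature of $\nabla$) and record its inhomogeneous transformation law under the change above, namely $\widehat\Rho_{ab}=\Rho_{ab}-\nabla_a\Upsilon_b+\Upsilon_a\Upsilon_b$. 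Projective flatness then amounts to local solvability, for the unknown $1$-form $\Upsilon_a$, of the first-order system $\nabla_a\Upsilon_b=\Rho_{ab}+\Upsilon_a\Upsilon_b$.

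The technical heart is the integrability analysis of this nonlinear system, and here I would use the device underlying the parabolic viewpoint without naming it: \emph{prolongation}. Adjoining $\Upsilon_a$ (and, in the usual projective way, a scaling density) to the original data linearises the system into a connection $\widetilde\nabla$ on an auxiliary rank-three bundle, built tautologically from $\nabla$ and $\Rho_{ab}$, whose parallel sections correspond to solutions $\Upsilon_a$. A local parallel section exists near every point if and only if $\widetilde\nabla$ is flat, so everything comes down to computing the curvature of $\widetilde\nabla$. In two dimensions the potential ``Weyl'' part of this curvature vanishes for dimensional reasons, and a direct computation leaves only the Cotton-type tensor $\nabla_a\Rho_{bc}-\nabla_b\Rho_{ac}$.

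The final step specialises to the Levi-Civita connection. In two dimensions the Ricci tensor of $g_{ab}$ is $K\,g_{ab}$ with $K$ the Gaussian curvature, so $\Rho_{ab}$ is a fixed multiple of $K\,g_{ab}$; since $\nabla g=0$ the obstruction collapses to $(\nabla_aK)\,g_{bc}-(\nabla_bK)\,g_{ac}$, and tracing with $g^{bc}$ shows this vanishes exactly when $K$ is constant. Hence $\widetilde\nabla$ is flat iff $K$ is constant, and via the prolongation this is iff $\nabla$ is projectively flat iff the desired straightening co\"ordinates exist, which delivers both directions simultaneously. I expect the main obstacle to be the prolongation and curvature computation: getting the auxiliary connection and its transformation right so that its flatness is genuinely equivalent to local solvability of the $\Upsilon$-system, and then extracting from a parallel frame a flat connection $\widehat\nabla$ in the projective class of $\nabla$, whose affine co\"ordinates are precisely the co\"ordinates in which every geodesic of $g_{ab}$ is straight.
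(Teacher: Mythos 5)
Your proposal is correct and runs on the same engine as the paper: the projective change of connection, the transformation law of the (Schouten/Ricci) tensor, a rank-three prolongation connection whose curvature reduces to the Cotton-type tensor $\nabla_a\mbox{\rm P}_{bc}-\nabla_b\mbox{\rm P}_{ac}$, and the observation that for a surface $R_{ab}=Kg_{ab}$ collapses this obstruction to $(\nabla_aK)g_{bc}-(\nabla_bK)g_{ac}$, i.e.\ to $dK=0$. The one place where you genuinely diverge is the endgame of the direction ``obstruction vanishes $\Rightarrow$ straightening co\"ordinates exist.'' You propose to take a parallel section of the prolonged bundle, recover from it a solution $\Upsilon_a$ of the Riccati system $\nabla_a\Upsilon_b=\mbox{\rm P}_{ab}+\Upsilon_a\Upsilon_b$, and then use the affine co\"ordinates of the resulting Ricci-flat (hence, in two dimensions, flat) connection $\widehat\nabla$. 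The paper instead identifies the three-dimensional space of parallel sections of its connection on $TM\oplus\Wedge^0$ with ${\mathbb{R}}^3$, obtains an immersion $M\looparrowright{\mathbb{RP}}_2$, and checks directly that each geodesic lands in a projective line, reading the co\"ordinates off an affine chart. Your route is the standard integrability-of-a-PDE argument and makes the equivalence with projective flatness completely explicit, but it obliges you to attend to two small points: a parallel section only yields $\Upsilon_a$ where its density component is nonvanishing (arrange this at the chosen point and shrink the neighbourhood), and vanishing of $\widehat R_{ab}$ must be upgraded to vanishing of the full curvature of $\widehat\nabla$, which holds in two dimensions because Ricci determines curvature there. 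The paper's ${\mathbb{RP}}_2$ construction sidesteps both issues and exhibits the straightening co\"ordinates directly; both are legitimate completions of the same prolongation argument.
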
    

Antonio~Di~Scala~\cite{dS} has a very nice proof of this theorem, which he
kindly explained to me (in English). He asked me about a proof by the methods
of parabolic geometry but insisted that I did not wave my hands at all. In
particular, I was not allowed to say `Cartan connection' nor `development map.'

So what follows is an application of some key ideas from parabolic geometry
without actually explaining any of the underlying theory. The discerning reader
will correctly suspect that this is just the tip of an iceberg. For a
comprehensive description of the iceberg itself, the reader is directed
to~\cite{parabook}.

I would like to thank Antonio~Di~Scala for many interesting discussions during 
the preparation of this article.

\section{Geodesics}\label{geo}
We need some notation and basic results on geodesics. Let $M$ be a smooth
two-dimensional manifold. We shall denote by $TM$ the tangent bundle of $M$
and by $\Wedge^1$ the bundle of $1$-forms on~$M$. Suppose
$$\nabla_a:TM\to\Wedge^1\otimes TM$$
is a torsion-free connection and $t\mapsto\gamma(t)\in M$ is a smooth curve.
Let us write $U^a$ for the velocity field along $\gamma$ and, having in mind a
torsion-free connection $\nabla_a$, write $\partial\equiv U^a\nabla_a$ for the
directional derivative along~$\gamma$. Then $\gamma$ is an affinely
parameterised geodesic for $\nabla_a$ if and only if the acceleration field
$\partial U^a$ vanishes. The geodesics in Beltrami's theorem, however, are 
{\em unparameterised\/} curves. It means that we should instead allow 
$\partial U^a= fU^a$ for some smooth function~$f$.

\begin{lem} In order that two torsion-free connections $\nabla_a$ and\/
$\widehat\nabla_a$ have the same unparameterised geodesics, it is necessary 
and sufficient that locally 
\begin{equation}\label{proj}
\widehat\nabla_aX^c=\nabla_aX^c+\Upsilon_aX^c+\Upsilon_bX^b\delta_a{}^c,
\end{equation}
where $\delta_a{}^c$ denotes the identity tensor and\/ $\Upsilon_a$ is an 
arbitrary $1$-form.
\end{lem}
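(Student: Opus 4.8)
The plan is to prove the two implications separately, the sufficiency being a one-line substitution and the necessity proceeding by first reducing to a pointwise algebraic condition and then solving it. For sufficiency, assume \eqref{proj} and take $X^c$ to be the velocity $U^c$; contracting with $U^a$ and using $U^a\delta_a{}^c=U^c$ gives
$$U^a\widehat\nabla_aU^c=\partial U^c+2(\Upsilon_bU^b)U^c.$$
Hence $\partial U^c=fU^c$, the condition for $\gamma$ to be an unparameterised $\nabla$-geodesic, yields $U^a\widehat\nabla_aU^c=(f+2\Upsilon_bU^b)U^c$, so $\gamma$ is an unparameterised $\widehat\nabla$-geodesic as well; since solving \eqref{proj} for $\nabla_a$ returns a relation of the same shape with $\Upsilon_a$ replaced by $-\Upsilon_a$, the reverse inclusion holds too and the two connections share their unparameterised geodesics.

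For necessity, I would first note that $\widehat\nabla_a$ and $\nabla_a$ act identically on functions, so their difference is tensorial, $\widehat\nabla_aX^c=\nabla_aX^c+\Gamma_{ab}{}^cX^b$, and that torsion-freeness of both forces $\Gamma_{ab}{}^c$ to be symmetric in its lower indices (the skew part is exactly the difference of the two vanishing torsions acting on $1$-forms). Given any point and any direction $U^a$ there, running the affinely parameterised $\nabla$-geodesic through that point makes $\partial U^c=0$, whence $U^a\widehat\nabla_aU^c=\Gamma_{ab}{}^cU^aU^b$; as this curve is assumed to be an unparameterised $\widehat\nabla$-geodesic, the right-hand side must be a multiple of $U^c$. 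Since the point and direction are arbitrary, this gives
$$\Gamma_{ab}{}^cU^aU^b=\lambda\,U^c\quad\mbox{for every tangent vector }U^a,$$
with $\lambda=\lambda(U)$ some scalar.

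The crux, and the step I expect to fight hardest with, is the purely algebraic deduction that this proportionality forces $\Gamma_{ab}{}^c=\Upsilon_a\delta_b{}^c+\Upsilon_b\delta_a{}^c$ for a single $1$-form $\Upsilon_a$. Polarising in $U^a$ shows that the symmetric bilinear map $(U,V)\mapsto\Gamma_{ab}{}^cU^aV^b$ carries every pair into the span of $U$ and $V$; in two dimensions one can then expand $\Gamma_{ab}{}^cU^aU^b\parallel U^c$ for $U^a=x\,e_1{}^a+y\,e_2{}^a$ in a basis and match the coefficients of $x$ and $y$ in the two components, which determines the four independent entries of $\Gamma_{ab}{}^c$ in terms of two numbers and reproduces exactly the claimed symmetric form, with $\lambda=2\Upsilon_dU^d$. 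Feeding this back into $\widehat\nabla_aX^c=\nabla_aX^c+\Gamma_{ab}{}^cX^b$ delivers \eqref{proj}. The delicate point is organising the polarisation so that the proportionality constant is genuinely linear in $U^a$ rather than merely degree-one homogeneous; the geometric reduction preceding it is routine given the existence of geodesics in every direction.
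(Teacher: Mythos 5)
Your proposal is correct and takes essentially the same route as the paper: write the difference tensor $\Gamma_{ab}{}^c$ (symmetric by torsion-freeness), reduce the geodesic condition to the pointwise algebraic statement $\Gamma_{ab}{}^cU^aU^b\propto U^c$ for all $U^a$, and solve that by linear algebra to obtain $\Gamma_{ab}{}^c=\delta_a{}^c\Upsilon_b+\delta_b{}^c\Upsilon_a$. The only difference is one of detail: you spell out the polarisation and basis computation (and the linearity of the proportionality factor) that the paper dismisses as ``a matter of linear algebra.''
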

\begin{proof} The general formula relating two torsion-free connections is
$$\widehat\nabla_aX^c=\nabla_aX^c+\Gamma_{ab}{}^cX^b,\quad\mbox{where}\enskip
\Gamma_{ab}{}^c=\Gamma_{ba}{}^c.$$
Therefore $\widehat\partial U^c=\partial U^c+U^a\Gamma_{ab}{}^cU^b$ and so we
require that
$$U^aU^b\Gamma_{ab}{}^c\propto U^c\quad\mbox{for all}\enskip U^a.$$
It is a matter of linear algebra to check that this forces 
$$\Gamma_{ab}{}^c=\delta_a{}^c\Upsilon_b+\delta_b{}^c\Upsilon_a$$
for some $\Upsilon_a$.
\end{proof}
The Ricci tensor $R_{ab}$ of $\nabla_a$, characterised by
$$(\nabla_a\nabla_b-\nabla_b\nabla_a)X^b=-R_{ab}X^b$$
for all vector fields $X^b$, is not necessarily symmetric. However, it is easy
to check that if $\widehat\nabla_a$ and $\nabla_a$ are related by~(\ref{proj}),
then
\begin{equation}\label{Ricci_change}\widehat 
R_{ab}=R_{ab}-2\nabla_a\Upsilon_b+\nabla_b\Upsilon_a+\Upsilon_a\Upsilon_b.
\end{equation}
Locally, therefore, we may always use (\ref{proj}) to arrange that the Ricci 
tensor be symmetric without disturbing its unparameterised geodesics.

Instead of proving Beltrami's theorem itself, we shall establish a more general
result concerning the geodesics of an arbitrary torsion-free affine 
connection. Bearing in mind that we can arrange the Ricci tensor to be 
symmetric, the more general result can be stated as follows.

\begin{thm} Suppose $M$ is a smooth two-dimensional manifold. Denote by $TM$
the tangent bundle of $M$ and by\/ $\Wedge^1$ the bundle of $1$-forms on~$M$.
Suppose $\nabla_a:TM\to\Wedge^1\otimes TM$ is a torsion-free connection.
Suppose that its Ricci tensor $R_{ab}$ is symmetric. Let\/
$Y_{abc}\equiv\nabla_aR_{bc}-\nabla_bR_{ac}$. Then $Y_{abc}=0$ if and only if
there are local co\"ordinates near any point in which the geodesics of
$\nabla_a$ become straight lines.
\end{thm}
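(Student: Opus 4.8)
The plan is to recognise the conclusion as projective flatness and to detect it by a linear connection built directly from~$\nabla_a$. By the Lemma, having local co\"ordinates in which the geodesics are straight lines is precisely the statement that $\nabla_a$ is, locally, projectively equivalent to a flat connection; so the task is to characterise this projective flatness by the vanishing of~$Y_{abc}$. To this end I would introduce the rank-three bundle $E=\Wedge^0\oplus\Wedge^1$ and the connection
$$D_a(\sigma,\mu_b)=(\nabla_a\sigma-\mu_a,\ \nabla_a\mu_b+R_{ab}\sigma),$$
whose parallel sections are exactly the solutions of the overdetermined system $\nabla_a\nabla_b\sigma+R_{ab}\sigma=0$ (with $\mu_b=\nabla_b\sigma$). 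The whole proof then rests on a single computation, namely that $D_a$ is flat if and only if $Y_{abc}=0$.

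To carry out that computation I would evaluate $(D_aD_b-D_bD_a)(\sigma,\mu_c)$. The top slot gives $\nabla_{[a}\nabla_{b]}\sigma-R_{[ab]}\sigma$, which vanishes because $\sigma$ is a genuine function and $R_{ab}$ is symmetric. In the bottom slot the terms linear in $\nabla_a\sigma$ cancel by symmetry, the derivative of $R_{ab}$ produces $Y_{abc}\sigma$, and the remaining terms assemble the curvature of $\nabla_a$ acting on $\mu_c$ together with $R_{[a|c}\mu_{b]}$. Here the key dimensional input enters: in two dimensions the curvature tensor is completely determined by the symmetric Ricci tensor, so these last terms cancel identically, leaving the curvature of $D_a$ equal to $Y_{abc}\sigma$ in the bottom slot. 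Thus $D_a$ is flat exactly when $Y_{abc}=0$. I expect this cancellation --- the two-dimensional vanishing of the projective Weyl curvature --- to be the crux of the argument, and the place where a reader must be convinced that no hidden machinery is being smuggled in.

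For the `if' direction, suppose $Y_{abc}=0$, so that $D_a$ is flat. On a neighbourhood of any point the parallel sections then form a three-dimensional space; choose a basis $(\sigma^0,\mu^0_b),(\sigma^1,\mu^1_b),(\sigma^2,\mu^2_b)$ with $\sigma^0$ nonzero at the point. I would take $(x,y)=(\sigma^1/\sigma^0,\sigma^2/\sigma^0)$ as candidate co\"ordinates; their differentials are independent because the three parallel sections are a frame, so this is a local diffeomorphism. To see that geodesics become straight lines, note that a straight line in these co\"ordinates is a level set $\{F=0\}$ of some $F=a_0\sigma^0+a_1\sigma^1+a_2\sigma^2$, which again solves $\nabla_a\nabla_bF+R_{ab}F=0$. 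On the set $\{F=0\}$ this forces $\nabla_a\nabla_bF=0$, so the one-form $\nabla_aF$ is parallel along the curve; since its kernel is one-dimensional, the tangent direction is parallel, that is $\partial U^a\propto U^a$, and the curve is an unparameterised geodesic. A dimension count matches these lines with the geodesics through each point.

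For the `only if' direction, suppose such co\"ordinates exist. Then $\nabla_a$ and the flat co\"ordinate connection share their unparameterised geodesics, so by the Lemma they differ by a one-form $\Upsilon_a$ as in~(\ref{proj}); since both Ricci tensors are symmetric, (\ref{Ricci_change}) forces $\nabla_{[a}\Upsilon_{b]}=0$. A short computation, differentiating (\ref{Ricci_change}) and using that $\Upsilon_a$ is then closed, shows that $Y_{abc}$ is unchanged by the projective change. As the flat connection has vanishing Ricci and hence vanishing $Y_{abc}$, we conclude $Y_{abc}=0$, completing the proof.
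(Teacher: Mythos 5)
Your proposal is correct, and it is the paper's argument viewed through the dual lens. The paper prolongs on ${\mathbb{T}}=TM\oplus\Wedge^0$ with $\nabla_a(X^b,\rho)=(\nabla_aX^b-\delta_a{}^b\rho,\ \nabla_a\rho+R_{ab}X^b)$, whereas you prolong on the dual bundle $\Wedge^0\oplus\Wedge^1$; the two curvature computations are transposes of one another, and in both cases the crux is exactly what you identify, namely that (\ref{curv}) (the two-dimensional statement that a symmetric Ricci tensor determines the full curvature) kills everything except $Y_{abc}$. Your choice of decomposition buys something concrete: parallel sections of your bundle are solutions $\sigma$ of $\nabla_a\nabla_b\sigma+R_{ab}\sigma=0$, and the co\"ordinates $(\sigma^1/\sigma^0,\sigma^2/\sigma^0)$ are written down explicitly, which is the classical Liouville picture echoed in the paper's final section; the paper instead sends $x$ to the line $L_x$ of parallel sections of ${\mathbb{T}}$ whose $TM$-part vanishes at $x$, obtaining $\phi:M\hookrightarrow{\mathbb{RP}}_2$ without ever naming co\"ordinates. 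You also run the geodesics-versus-lines step in the opposite direction: the paper restricts its connection to a geodesic, reads off the second-order ODE $\partial\partial f+R_{ab}U^aU^bf=0$, and concludes that each geodesic lands in a projective line, while you pull back a line as a level set $\{F=0\}$, observe that $\nabla_bF$ is parallel along it, and appeal to uniqueness of geodesics; your version needs the small remark (worth stating) that a nonzero parallel section $(F,\nabla_bF)$ cannot have both slots vanish at a point, so $\nabla_bF\neq0$ on $\{F=0\}$ and the kernel genuinely is a line field. Finally, for the `only if' direction the paper substitutes $R_{ab}=\nabla_a\Upsilon_b-\Upsilon_a\Upsilon_b$ and computes $Y_{abc}$ directly, whereas you invoke the projective invariance of $Y_{abc}$ under (\ref{proj}) with closed $\Upsilon_a$; that invariance is true and is the marginally stronger statement, but its verification is essentially the same computation, so you should carry it out rather than leave it as ``a short computation.''
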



The reason that this theorem is more general than Beltrami's is that,
in case of a metric connection in two dimensions, the Gau{\ss}ian curvature $K$
is characterised by $R_{ab}=Kg_{ab}$ whence 
$Y_{abc}=(\nabla_aK)g_{bc}-(\nabla_bK)g_{ac}$.

\medskip\noindent{\bf Remark}\enskip Another convenience of a having a
symmetric Ricci tensor in two dimensions is that, in this case,
\begin{equation}\label{curv}(\nabla_a\nabla_b-\nabla_b\nabla_a)X^c
=\delta_a{}^cR_{bd}X^d-\delta_b{}^cR_{ad}X^d,\end{equation} 
as one may readily verify.

\section{A surprising connection}
Fixing a torsion-free connection $\nabla_a$ on $TM$ with symmetric Ricci 
tensor~$R_{ab}$, we define a 
connection, also denoted by $\nabla_a$, on the bundle 
${\mathbb{T}}\equiv TM\oplus\Wedge^0$ by 
\begin{equation}\label{tractors}{\mathbb{T}}=
\begin{array}{c}TM\\[-4pt] \oplus\\[-2pt]\Wedge^0\end{array}\ni
\left[\!\begin{array}{c}X^b\\ \rho\end{array}\!\right]
\stackrel{\nabla_a}{\longmapsto}
\left[\!\begin{array}{c}\nabla_aX^b-\delta_a{}^b\rho\\ 
\nabla_a\rho+R_{ab}X^b\end{array}\!\right]\in\Wedge^1\otimes{\mathbb{T}}.
\end{equation}
We may compute its curvature:
$$\nabla_a\nabla_b\left[\!\begin{array}{c}X^c\\ \rho\end{array}\!\right]
=\left[\!\begin{array}{c}\nabla_a(\nabla_bX^c-\delta_b{}^c\rho)
-\delta_a{}^c(\nabla_b\rho+R_{bd}X^d)\\
\nabla_a(\nabla_b\rho+R_{bc}X^c)+R_{ac}(\nabla_bX^c-\delta_b{}^c\rho)
\end{array}\!\right]$$
so
$$(\nabla_a\nabla_b-\nabla_b\nabla_a)
\left[\!\begin{array}{c}X^c\\ \rho\end{array}\!\right]
=\left[\!\begin{array}{c}(\nabla_a\nabla_b-\nabla_b\nabla_a)X^c-
(\delta_a{}^cR_{bd}-\delta_b{}^cR_{ad})X^d\\
(\nabla_aR_{bc}-\nabla_bR_{ac})X^c
\end{array}\!\right]$$
but, according to (\ref{curv}), the first row vanishes and so we are left with
$$(\nabla_a\nabla_b-\nabla_b\nabla_a)
\left[\!\begin{array}{c}X^c\\ \rho\end{array}\!\right]
=\left[\!\begin{array}{c}0\\
Y_{abc}X^c
\end{array}\!\right].$$
In other words, this connection is flat if and only if $Y_{abc}=0$, which is 
somewhat surprising.

\section{Proof of main theorem}
We are now in a position to prove the generalised Beltrami theorem from
Section~\ref{geo}. One direction is mindless computation. Specifically, if the
geodesics of $\nabla_a$ are straight lines in local co\"ordinates, then
(\ref{proj}) holds with $\widehat\nabla_a$ being flat. According to
(\ref{Ricci_change}) with $R_{ab}$ symmetric, we conclude that
$$R_{ab}=\nabla_a\Upsilon_b-\Upsilon_a\Upsilon_b\quad\mbox{and}\quad
\nabla_{[a}\Upsilon_{b]}=0.$$
We now compute
$$Y_{abd}=\nabla_aR_{bd}-\nabla_bR_{ad}
=(\nabla_a\nabla_b-\nabla_b\nabla_a)\Upsilon_d
-\Upsilon_b\nabla_a\Upsilon_d+\Upsilon_a\nabla_b\Upsilon_d$$
and, from~(\ref{curv}), conclude that
$$Y_{abd}
=-\Upsilon_a(R_{bd}-\nabla_b\Upsilon_d)+\Upsilon_b(R_{ad}-\nabla_a\Upsilon_d)
=-\Upsilon_a\Upsilon_b\Upsilon_d+\Upsilon_b\Upsilon_a\Upsilon_d,$$
which vanishes, as advertised.

In other other direction we use the surprising flat
connection~(\ref{tractors}). As the bundle ${\mathbb{T}}$ has rank~$3$, locally
we may find a three-dimensional space of covariant constant sections, which we
shall identify as~${\mathbb{R}}^3$ (and replace $M$ by a suitable open subset
on which this is valid). Each point $x\in M$ now gives rise to a
$1$-dimensional linear subspace in~${\mathbb{R}}^3$, namely
$$L_x\equiv\left\{\left[\!\begin{array}{c}X^b\\ \rho\end{array}\!\right]\in
\Gamma({\mathbb{T}})
\mbox{ s.t.\ }
\nabla_a\left[\!\begin{array}{c}X^b\\ \rho\end{array}\!\right]=0
\enskip\mbox{and}\enskip X^b|_x=0\right\}.$$
In this way, we obtain $\phi:M\looparrowright{\mathbb{RP}}_2$ (and replace $M$
by a suitable open subset on which $\phi:M\hookrightarrow{\mathbb{RP}}_2$).
Now suppose $\gamma\hookrightarrow M$ is a geodesic with velocity 
vector~$U^a$, as before. Restricting the connection (\ref{tractors}) to 
$\gamma$ gives $\partial:{\mathbb{T}}\to{\mathbb{T}}$. Specifically,
\begin{equation}\label{tractors_along_gamma}
\partial\left[\!\begin{array}{c}X^b\\ \rho\end{array}\!\right]=
\left[\!\begin{array}{c}\partial X^b-\rho U^b\\ 
\partial\rho+U^aR_{ab}X^b\end{array}\!\right].\end{equation}
In particular, if $X^b$ is somewhere is somewhere tangent to~$\gamma$, then 
this is always the case along $\gamma$ and (\ref{tractors_along_gamma}) becomes
$$\partial\left[\!\begin{array}{c}fU^b\\ \rho\end{array}\!\right]=
\left[\!\begin{array}{c}(\partial f-\rho)U^b\\ 
\partial\rho+fU^aR_{ab}U^b\end{array}\!\right],$$
which is simply the second order linear ordinary differential equation
$$\partial\partial f+R_{ab}U^aU^b f=0$$
along~$\gamma$. Its two-dimensional space of solutions is a linear subspace of
the space of covariant constant sections of~${\mathbb{T}}$. So the geodesic
$\gamma$ gives a straight line in~${\mathbb{RP}}_2$. Otherwise said, the
diffeomorphism $\phi:M\hookrightarrow{\mathbb{RP}}_2$ maps geodesics in $M$ to
straight lines in~${\mathbb{RP}}_2$. These lines may be viewed in a
standard affine chart ${\mathbb{R}}^2\subset{\mathbb{RP}}_2$ and the proof is
complete.

\section{Beltrami's task}
In fact, Antonio Di Scala pointed out to me that Beltrami finds in \cite{B} the
general form of a metric on ${\mathbb{R}}^2$ with the property that its
geodesics, as unparameterised curves, are straight lines and, only having done
this, does he note that these metrics have constant Gau{\ss}ian curvature. This
is a much more challenging task but one that is also familiar from parabolic
differential geometry (as a particular instance of finding solutions to a
so-called `first BGG operator').

Without going into details, observations of R.~Liouville~\cite{L} combine with
the connection (\ref{tractors}) on ${\mathbb{R}}^2$ in allowing one easily to
write down the general metric defined on
$U^{\mathrm{open}}\subseteq{\mathbb{R}}^2$ and having the property that its
geodesics are straight lines. There is a six parameter family thereof:
$$\frac{(ry^2+2qy+u)dx^2-2(rxy+qx+py+t)dx\,dy+(rx^2+2px+s)dy^2}
{\big((rx^2+2px+s)(ry^2+2qy+u)-(rxy+qx+py+t)^2\big)^2}$$
for $p,q,r,s,t,u\in{\mathbb{R}}$, defined wherever this expression is positive
definite.

The case $(p,q,r,s,t,u)=(0,0,1,1,0,1)$ gives the Thales metric
$$\frac{(1+y^2)\,dx^2-2xy\,dx\,dy+(1+x^2)\,dy^2}{(1+x^2+y^2)^2}$$
defined everywhere, whilst the case $(p,q,r,s,t,u)=(0,0,-1,1,0,1)$ is the 
Beltrami metric
$$\frac{(1-y^2)\,dx^2+2xy\,dx\,dy+(1-x^2)\,dy^2}{(1-x^2-y^2)^2}$$
defined on the unit disc.

In any case, a rather involved computation confirms that the Gau{\ss}ian 
curvature is constant in the general case, specifically 
$$K=r(su-t^2)-p^2u+2pqt-q^2s.$$

\end{document}